\documentclass[11pt,reqno]{amsart}

\usepackage[utf8]{inputenc}
\usepackage[T1]{fontenc}
\usepackage[english]{babel}
\usepackage{amssymb}
\usepackage{graphicx}
\usepackage[a4paper,margin=30mm]{geometry}
\usepackage[hidelinks]{hyperref}
\hypersetup{
  pdftitle={The Samuelson Condition and Tangent Lines of Quadratic Curves},
  pdfauthor={Yasuhiro Kurokawa},
  pdfsubject={Preprint},
  pdfkeywords={Web geometry, Samuelson condition, Lagrangian 2-web, Quadratic curves, Envelope}
}

\newcommand{\R}{\mathbb{R}}

\theoremstyle{plain}
\newtheorem{theorem}{Theorem}[section]
\newtheorem{proposition}[theorem]{Proposition}
\theoremstyle{remark}
\newtheorem{remark}[theorem]{Remark}

\begin{document}

\title[The Samuelson Condition and Quadratic Curves]{The Samuelson Condition and Tangent Lines of Quadratic Curves}

\author{Yasuhiro Kurokawa}
\address{Department of Architecture, School of Architecture, Shibaura Institute of Technology, Koto-ku, Toyosu 3-7-5, Tokyo, Japan}
\email{kurokawa@sic.shibaura-it.ac.jp}
\thanks{ORCID: 0009-0004-4872-7388}

\subjclass[2020]{53A60}
\keywords{Web geometry, Samuelson condition, Lagrangian 2-web, Quadratic curves, Envelope}
\date{}

\begin{abstract}
We show that the Samuelson condition, an area condition, is not satisfied by the Lagrangian 2-web formed by the tangent lines to a non-degenerate real quadratic curve.
\end{abstract}

\maketitle

\section{Introduction}

Let $(\R^2, \omega)$ be the symplectic plane with the canonical symplectic form $\omega=dx\wedge dy.$
A pair $(\mathcal F_1,\, \mathcal F_2)$ of Lagrangian foliations of codimension 1 in $\R^2$, such that the leaves of $\mathcal F_1$ are transverse to the leaves of $\mathcal F_2$, is called a \textit{Lagrangian 2-web} in $\R^2$.
Two Lagrangian 2-webs are said to be equivalent if there exists a symplectic diffeomorphism mapping one to the other while preserving the foliations.

In \cite{CooperRussellSamuelson}, the authors discussed an area condition associated with the extremal properties of equilibrium systems which arise in the context of economic agents and classical thermodynamics, originally characterized by Samuelson.
The condition is described as follows. Consider the four areas $A, B, C, D$ of quadrilaterals formed by the curves of a Lagrangian 2-web as shown in Figure~\ref{fig:samuelson}.

\begin{figure}[h]
\begin{center}
\includegraphics[width=1.0\textwidth, trim=-6cm 4cm 9cm 0cm, clip]{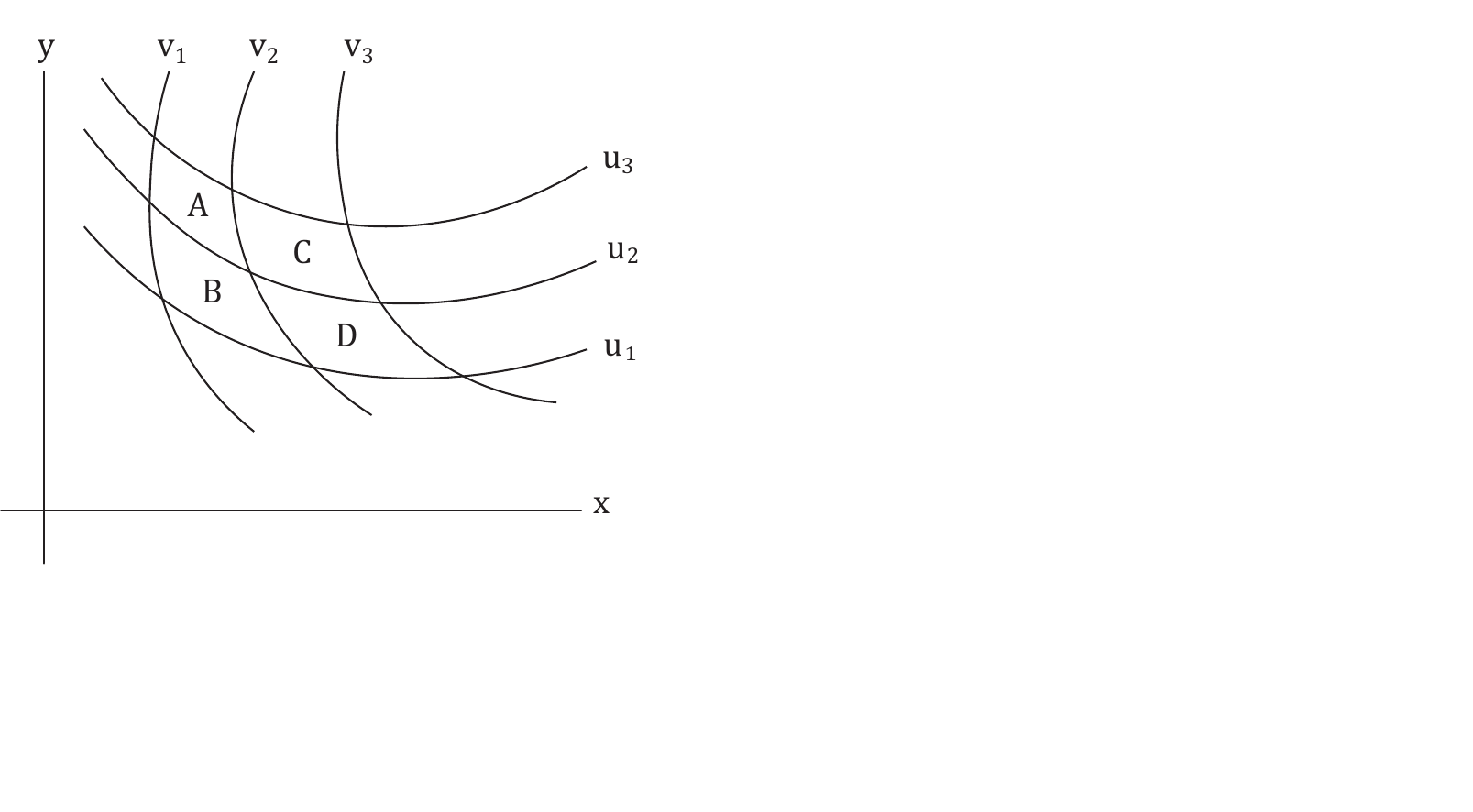}
\caption{Samuelson condition}
\label{fig:samuelson}
\end{center}
\end{figure}

Consider the equation
\begin{equation*}
\frac{A}{B}=\frac{C}{D}.
\end{equation*}
If this equation holds for any four quadrilaterals formed by the curves as in Figure \ref{fig:samuelson}, then the Lagrangian 2-web is said to satisfy the \textit{Samuelson condition}. We call such a web a \textit{Samuelson web}.
The standard Samuelson web is given by $x=\text{const.},\,y=\text{const.}$ in $\R^2=\{(x, y)\}.$

As mentioned in \cite{Tabachnikov} (p.273), it is an intriguing problem to characterize the families of tangents to curves that are equivalent to the standard 2-web. In the case of 3-webs formed by line families in the plane, the Theorem of Graf and Sauer provides a beautiful characterization \cite{Blaschke, Chern, PereiraPirio}: such a web is hexagonal if and only if it consists of the family of tangents to an algebraic curve of class three.

Drawing motivation from this classic result, we investigate, as a first step in this paper, whether the family of tangents to a non-degenerate quadratic curve forms a Samuelson web. Our analysis reveals that these 2-webs, 
despite being the fundamental algebraic cases arising from tangent lines, fail to satisfy the Samuelson condition. By establishing this incompatibility within the algebraic framework, we highlight the necessity of turning our attention to transcendental curves in future research to identify geometric structures that satisfy this area condition.

This paper is organized as follows. In Section 2, we review the definition of Samuelson webs and present a simplified formula for the d'Alembert condition. Sections 3 and 4 are devoted to the description of tangent lines to quadratic curves and the proof of the main theorem.

Throughout this paper, we assume that all mappings are of class $C^{\infty}$, unless otherwise stated.

\section{Samuelson Webs and the Area Condition}
In this section, we review the definition of Samuelson webs and the associated area condition as discussed in \cite{CooperRussellSamuelson, FerraraUdriste, OzawaSatoSuzuki}.

Let
$$u=u(x,y), \quad v=v(x,y)$$
be two smooth functions on $\R^2=\{(x, y)\}$ such that their level curves are transverse.
These functions define a 2-web $\mathcal W$ on $\R^2$.

By a foliation-preserving diffeomorphism
$$\phi : (x, y)\mapsto (u(x, y), v(x, y)),$$
$\mathcal W$ is transformed into the standard 2-web formed by $u=\text{const.}, v=\text{const.}$ on $\R^2=\{(u, v)\}.$
However, $\phi$ is not necessarily area-preserving, and hence not necessarily a symplectic diffeomorphism.

The Jacobian of $\phi$ is given by
$$J_\phi(x,y)=u_x(x,y)v_y(x,y)-u_y(x,y)v_x(x,y).$$
The Jacobian of $\phi^{-1}$ is given by
$$J_{\phi^{-1}}(u,v)=(u_x(x,y)v_y(x,y)-u_y(x,y)v_x(x,y))^{-1}.$$

Replacing $u$ and $v$ with $U(u)$ and $V(v)$, respectively, the diffeomorphism $R : (u, v)\mapsto (U(u),\,V(v))$ that preserves the standard web is called a \textit{recalibration}.

\begin{proposition}[\cite{CooperRussellSamuelson}]
Let $\mathcal W$ be a Lagrangian 2-web on $\R^2$ defined by level functions $u=u(x,y), v=v(x,y).$
Then the following are equivalent:
\begin{enumerate}
    \item[(1)] $\mathcal W$ is a Samuelson web.
    \item[(2)] The Jacobian $J_{\phi^{-1}}(u, v)$ factorizes as
    $$J_{\phi^{-1}}(u,v)=a(u) \cdot b(v).$$
    \item[(3)] There exists a recalibration $R: (u, v)\mapsto (U(u), V(v))$ such that
    $$J_{R\circ \phi}(x, y)=1.$$
\end{enumerate}
\end{proposition}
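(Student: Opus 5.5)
The plan is to move everything to the $(u,v)$-plane, where the web becomes the standard grid. There each area $A,B,C,D$ becomes an integral of $J_{\phi^{-1}}$ over a coordinate rectangle. Write $J:=J_{\phi^{-1}}(u,v)$. Since the level curves are transverse, $J$ never vanishes, so it has constant sign on a connected domain; replacing $u$ by $-u$ if necessary, we may assume $J>0$.

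Fix $u_0<u_1<u_2$ and $v_0<v_1<v_2$, and put
$$I(\alpha,\beta;\gamma,\delta)=\int_\alpha^\beta\!\!\int_\gamma^\delta J(s,t)\,dt\,ds ,$$
which is the area of the corresponding curvilinear quadrilateral by the change of variables formula. Reading Figure~\ref{fig:samuelson} as the four cells of the $2\times2$ grid, we set
$$A=I(u_0,u_1;v_0,v_1),\quad B=I(u_1,u_2;v_0,v_1),\quad C=I(u_0,u_1;v_1,v_2),\quad D=I(u_1,u_2;v_1,v_2).$$
With this labelling the Samuelson condition reads $AD=BC$ for all such choices.

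\textbf{(2)$\Rightarrow$(1).} If $J=a(u)b(v)$, each $I$ splits as $\bigl(\int a\bigr)\bigl(\int b\bigr)$. Then $A/B$ and $C/D$ both equal $\int_{u_0}^{u_1}a\big/\int_{u_1}^{u_2}a$, so the condition holds.

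\textbf{(2)$\Leftrightarrow$(3).} By the chain rule,
$$J_{R\circ\phi}(x,y)=U'(u)V'(v)\,J_\phi(x,y)=\frac{U'(u)V'(v)}{J_{\phi^{-1}}(u,v)}.$$
Hence $J_{R\circ\phi}=1$ if and only if $J_{\phi^{-1}}=U'(u)V'(v)$. Given a factorization $J=a(u)b(v)$, the factors $a$ and $b$ are nonvanishing, and we take $U=\int a$ and $V=\int b$; these are diffeomorphisms onto their images. Conversely, (3) yields (2) with $a=U'$ and $b=V'$.

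\textbf{(1)$\Rightarrow$(2).} This is the main step, and the plan is to extract a pointwise identity by limits and differentiation. First divide $AD=BC$ by $(u_1-u_0)(v_2-v_1)$ and let $u_0\to u_1$, $v_2\to v_1$. Continuity of $J$ gives
$$\int_{v_0}^{v_1}J(u_1,t)\,dt\cdot\int_{u_1}^{u_2}J(s,v_1)\,ds=I(u_1,u_2;v_0,v_1)\,J(u_1,v_1).$$
Next differentiate this identity in $u_2$ and then in $v_0$. This yields
$$J(u_1,v_0)\,J(u_2,v_1)=J(u_2,v_0)\,J(u_1,v_1)\qquad\text{for all } u_1<u_2,\ v_0<v_1 .$$
This says exactly that $\partial_u\partial_v\log J=0$, for instance by letting $u_2\to u_1$ and $v_1\to v_0$, so the identity extends to all orderings. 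Fixing a base point $(u_*,v_*)$ then gives
$$J(u,v)=\frac{J(u,v_*)\,J(u_*,v)}{J(u_*,v_*)} ,$$
which is the required factorization $a(u)b(v)$; on a general connected domain one argues locally and patches.

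The main obstacle I expect is bookkeeping rather than analysis. One must match the figure's labelling of $A,B,C,D$ to the grid cells so that the condition really is $AD=BC$. One must also justify the limits, which is routine by the continuity of $J$.
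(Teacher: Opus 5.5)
The paper itself gives no proof of this proposition. It is quoted from Cooper--Russell--Samuelson, and the only part used later is the chain (1)$\Rightarrow$(2)$\Rightarrow$ the d'Alembert condition of Remark~2.2, which is then evaluated through Theorem~2.3. There is no argument in the paper to compare against, and judged on its own your argument is correct.

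Dividing $AD=BC$ by $(u_1-u_0)(v_2-v_1)$ and letting $u_0\to u_1$, $v_2\to v_1$ gives exactly your displayed identity. Differentiating in $u_2$ and then in $v_0$ gives $J(u_1,v_0)J(u_2,v_1)=J(u_2,v_0)J(u_1,v_1)$. What your route adds is a self-contained passage from the area condition to the pointwise condition $\partial_u\partial_v\log J_{\phi^{-1}}=0$, which is what the main theorem actually tests. There is a small simplification available. The identity is invariant under $u_1\leftrightarrow u_2$ and under $v_0\leftrightarrow v_1$, so it holds for all orderings with no limiting argument. The factorization then follows by substituting $u_1=u_*$, $v_0=v_*$.

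One claim needs correcting: on a general connected domain, local factorizations do not automatically patch together. Suppose $\phi(\Omega)$ is U-shaped, with two vertical arms joined by a bottom bar. Let $\log J$ be $0$ on the left arm and on the bar, and $\psi(v)$ on the right arm. Choose $\psi$ to vanish near the $v$-range of the bar but not identically. Then:
\begin{itemize}
\item $J$ splits on every rectangle contained in $\phi(\Omega)$, so every $2\times2$ grid satisfies $AD=BC$;
\item yet comparing one point of each arm at a height where $\psi\neq0$ with two points of the bar shows that $J$ is not globally of the form $a(u)b(v)$.
\end{itemize}
So your proof of (1)$\Rightarrow$(2) is global when $\phi(\Omega)$ is a rectangle, for example when $\phi$ maps onto $\R^2$ as in the paper's setup. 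More generally it is global when the horizontal and vertical slices of $\phi(\Omega)$ are connected. Otherwise it is only local. This is harmless for the paper, which needs only the local statement.
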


\begin{remark}[\cite{CooperRussellSamuelson, OzawaSatoSuzuki}]
If $J_{\phi^{-1}}$ splits multiplicatively as $J_{\phi^{-1}}=a(u) \cdot b(v)$, then $\log J_{\phi^{-1}}=\log a(u) + \log b(v)$ satisfies the d'Alembert condition \cite{PrastaroRassias}:
$$\frac{\partial^2\log J_{\phi^{-1}}}{\partial u\partial v}=0.$$
\end{remark}

Next, we calculate $\frac{\partial^2\log J_{\phi^{-1}}}{\partial u\partial v}$ following \cite{OzawaSatoSuzuki}.

Let $a(x,y), b(x,y)$ be two smooth functions on $\R^2=\{(x, y)\}.$
Define a 2-web on $\R^2$ by the differential equations
$$\frac{dy}{dx}=a(x,y), \quad \frac{dy}{dx}=b(x, y).$$
The leaves are integral curves of the vector fields
$$\frac{\partial}{\partial x}+a(x,y)\frac{\partial}{\partial y}, \quad
\frac{\partial}{\partial x}+b(x,y)\frac{\partial}{\partial y}.$$
They are level curves of the first integrals of the above differential equations.
The first integrals $u(x, y), \, v(x, y)$ satisfy
\begin{equation}
\left(\frac{\partial}{\partial x}+a(x,y)\frac{\partial}{\partial y}\right)u=0, \quad
\left(\frac{\partial}{\partial x}+b(x,y)\frac{\partial}{\partial y}\right)v=0.
\label{eq:first_integrals}
\end{equation}

From this, we obtain
\begin{align*}
J_{\phi^{-1}} &= ( (-a u_y)v_y -(-b v_y) u_y)^{-1} = (u_yv_y(b-a))^{-1}.
\end{align*}

Furthermore, by \eqref{eq:first_integrals}, we have the change of basis:
\begin{align*}
\frac{\partial}{\partial u} &= x_u\frac{\partial}{\partial x}+y_u\frac{\partial}{\partial y} = J_{\phi^{-1}}\left(v_y\frac{\partial}{\partial x}-v_x\frac{\partial}{\partial y}\right)\\
&= (u_y(b-a))^{-1}\left(\frac{\partial}{\partial x}+ b\frac{\partial}{\partial y}\right),\\
\frac{\partial}{\partial v} &= x_v\frac{\partial}{\partial x}+y_v\frac{\partial}{\partial y} = J_{\phi^{-1}}\left(-u_y\frac{\partial}{\partial x}+u_x\frac{\partial}{\partial y}\right)\\
&= -(v_y(b-a))^{-1}\left(\frac{\partial}{\partial x}+ a\frac{\partial}{\partial y}\right).
\end{align*}
Then we differentiate $\log J_{\phi^{-1}}$ with respect to $u$ and $v$. In \cite{OzawaSatoSuzuki}, the following identity was derived with the help of MAPLE.

\begin{theorem}[\cite{OzawaSatoSuzuki}]
The term related to the d'Alembert condition is given by:
{\small
\begin{equation}
\begin{split}
{u_y}^2 {v_y}^2 (a-b)^4 \frac{\partial^2\log J_{\phi^{-1}}}{\partial u\partial v}
&= a_{xx}a + 2ba_{xy} a - 2a^2 b_{xy} - 2a^2{b_y}^2 - 2b^2{a_y}^2 \\
&\quad + a_x b_y b + 4b_xa_x + a^2b_{yy} b - 4ab_y b_x - b^3 a_{yy} \\
&\quad - 2{b_x}^2 - 2{a_x}^2 + 3b_xa_y b + b_x a_y a + ab^2 a_{yy} \\
&\quad - 4a_x a_yb - a^3 b_{yy} + 2ab b_{xy} - b_{xx}a + 3a_xb_y a \\
&\quad - 2b^2 a_{xy} + b_{xx} b - a_{xx}b + 4ab_y a_y b.
\end{split}
\label{eq:big_equation}
\end{equation}
}
\end{theorem}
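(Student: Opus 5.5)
The plan is to avoid any second derivatives of the first integrals $u,v$. Everything will be pushed onto the coefficients $a,b$ using the transport equations that follow from \eqref{eq:first_integrals}. Write $X_a=\partial_x+a\,\partial_y$ and $X_b=\partial_x+b\,\partial_y$. By the change of basis displayed above,
$$\frac{\partial}{\partial u}=\frac{X_b}{u_y(b-a)},\qquad \frac{\partial}{\partial v}=-\frac{X_a}{v_y(b-a)},$$
and $\log J_{\phi^{-1}}=-\log u_y-\log v_y-\log(b-a)$. Differentiating $X_a u=0$ in $y$ gives $X_a(u_y)=-a_y u_y$, that is, $X_a\log u_y=-a_y$. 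Likewise $X_b\log v_y=-b_y$. These are the only facts about $u,v$ I intend to use.

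The key step is to choose, for each of the three summands, the order of differentiation so that only these two known transport identities appear. This is legitimate because $\partial/\partial u$ and $\partial/\partial v$ commute.
\begin{itemize}
\item For $\log u_y$: apply $\partial_v$ first, since it involves $X_a$. This gives $\partial_v\log u_y=a_y/(v_y(b-a))$. Then applying $\partial_u$, which involves $X_b$, produces only $X_b a_y$, $X_b(b-a)$ and $X_b\log v_y=-b_y$.
\item For $\log v_y$: symmetrically apply $\partial_u$ first, which gives $-b_y/(u_y(b-a))$. Then apply $\partial_v$, using $X_a\log u_y=-a_y$.
\item For $\log(b-a)$: the order is free. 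Compute $\partial_v\log(b-a)=-X_a(b-a)/(v_y(b-a)^2)$, then apply $\partial_u$. This uses $X_b X_a(b-a)$, $X_b\log v_y=-b_y$ and $X_b(b-a)$.
\end{itemize}

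Each resulting term is a polynomial in $a,b$ and their derivatives up to order two. The $u_y,v_y$ dependence appears only through the common prefactor $1/(u_y v_y)$, and the denominators are powers of $(b-a)$ up to the fourth, which come from the $\log(b-a)$ term. Multiplying through by $(a-b)^4$ clears the denominators.

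It then remains to expand $X_bX_a(b-a)$, $X_b(a_y)$ and the like in terms of $x,y$-partials and collect monomials. For example, $X_bX_a b=b_{xx}+(a+b)b_{xy}+ab\,b_{yy}+a_x b_y+b\,a_y b_y$. The result is compared term by term with the right-hand side of \eqref{eq:big_equation}.

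The main obstacle is this final bookkeeping rather than any conceptual point. I would organize it by grouping monomials by type: second derivatives of $a$, second derivatives of $b$, and products of first derivatives. Using the antisymmetry under $a\leftrightarrow b$ (together with $u\leftrightarrow v$) to halve the work, I would check, for instance, that the $a_{xx}$-type terms combine to $a_{xx}(a-b)$.

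I would also check the normalizing power of $u_yv_y$ carefully. The derivation above naturally yields a single factor $1/(u_yv_y)$, so the prefactor ${u_y}^2{v_y}^2$ in the statement needs to be reconciled with it, either by a normalization convention or as a misprint.
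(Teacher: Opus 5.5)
Your plan is correct and is essentially the paper's route: push $\partial_u=X_b/(u_y(b-a))$ and $\partial_v=-X_a/(v_y(b-a))$ through $\log J_{\phi^{-1}}$, which the paper leaves to MAPLE. Your bookkeeping also closes: with $D=b-a$ your three pieces give
\[
u_yv_y D^4\,\frac{\partial^2\log J_{\phi^{-1}}}{\partial u\,\partial v}
=-D^2\bigl(X_ba_y+X_ab_y+2a_yb_y\bigr)+D\bigl(a_yX_bD+2b_yX_aD+X_bX_aD\bigr)-2\,X_aD\,X_bD,
\]
and this expands monomial by monomial to exactly the right-hand side $K$. One small slip: $K$ is symmetric, not antisymmetric, under $(a,u)\leftrightarrow(b,v)$, though the symmetry still halves the work.

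Your doubt about the prefactor is justified, and it is a misprint rather than a normalization convention. Replacing $u$ by $\lambda u$ leaves $a,b$, and hence $K$, unchanged, but it multiplies ${u_y}^2{v_y}^2(a-b)^4\,\partial_u\partial_v\log J_{\phi^{-1}}$ by $\lambda$. So the correct identity is $u_yv_y(a-b)^4\,\partial_u\partial_v\log J_{\phi^{-1}}=K$. This is harmless for the main theorem, which only uses whether $K$ vanishes.
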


As mentioned in \cite{OzawaSatoSuzuki}, the above equation in Theorem 2.3 is essentially the formula in \cite{CooperRussell2006}, up to the placement of parentheses.
Furthermore, a similar recalculation, called the Holy Grail Equation in \cite{CooperRussell2009}, appears to be incorrect, as noted in \cite{OzawaSatoSuzuki}.

\begin{remark}
Let us denote the right-hand side of equation \eqref{eq:big_equation} in Theorem 2.3 by $K$. It can be reorganized as follows:
\begin{gather*}
K = 3(a_x+ b a_y )(b_x+a b_y )+(a_x+a a_y )(b_x+ b b_y )
-2(a_x+b a_y)^2-2(b_x+a b_y)^2\\
+(a-b)(a_{xx}+2ba_{xy}+b^2a_{yy})
+(b-a)(b_{xx}+2ab_{xy}+a^2b_{yy}).
\end{gather*}
\end{remark}

\section{Quadratic Curves and Their Tangent Lines}

We regard a quadratic curve as the envelope of a family of lines \cite{BruceGiblin, IsmailHamad}.
Consider a family of lines in $\R^2=\{(x, y)\}$ given by
$$F_{t}(x, y)=u(t) x+v(t)y +w(t)=0,$$
where $u(t), v(t), w(t)$ are polynomials of degree at most $2$, and $(u(t), v(t))\neq(0, 0)$ for any $t.$
Then $F_t(x, y)$ can be expressed as
$$F(t, x, y)=A(x, y)t^2+B(x,y) t+C(x, y)=0, \quad F_t(x, y)=F(t, x, y)$$
where $A, B, C$ are polynomials of degree at most $1.$

\begin{proposition}
The envelope of the family of lines $F_{t}(x, y)=0$ is given by the equation $B^2-4AC=0.$
Moreover, the envelope $B^2-4AC=0$ coincides with a quadratic curve, and the converse is also true. Here we restrict our attention to real (non-degenerate) quadratics: the ellipse, hyperbola, and parabola.
\end{proposition}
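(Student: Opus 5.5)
The plan has three steps: compute the envelope, show it is a conic, and prove the converse.

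\emph{Envelope.} The envelope of $F(t,x,y)=A t^2+Bt+C=0$ is obtained by eliminating $t$ from
\[
F=0,\qquad \partial F/\partial t = 2At+B=0 .
\]
Where $A\neq 0$, the second equation gives $t=-B/(2A)$. Substituting this into $F=0$ yields
\[
\frac{B^2}{4A}-\frac{B^2}{2A}+C=0,
\qquad\text{that is,}\qquad B^2-4AC=0 .
\]
The same conclusion has a second reading. Through a point $(x,y)$ the lines of the family correspond to the real roots $t$ of the quadratic $A t^2+Bt+C$. The envelope is exactly the locus where two such lines coincide, which is where the discriminant $B^2-4AC$ vanishes. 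The exceptional points with $A=0$ correspond to the parameter value $t=\infty$. I would handle them by passing to homogeneous coordinates $[t:s]$ on $\mathbb{P}^1$, where the discriminant is invariant, so the formula still holds there.

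\emph{The envelope is a conic.} Since $A,B,C$ are affine in $(x,y)$, the expression $B^2-4AC$ is a polynomial of degree at most $2$, so its zero set is a conic. For non-degeneracy I would use the dual picture. The map $t\mapsto [u(t):v(t):w(t)]$ parametrizes a conic $\Gamma^*$ in the dual projective plane of lines, provided the coefficient vectors of $u,v,w$ are linearly independent. The envelope is the dual curve of $\Gamma^*$, and the dual of a smooth conic is a smooth conic. Concretely, write the coefficient matrix $M$ of $(u,v,w)$ in the basis $t^2,t,1$. The discriminant quadratic form $B^2-4AC$, viewed on $(x,y,1)$, is then $M$-conjugate to the fixed nondegenerate form $b^2-4ac$. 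Hence it is non-degenerate exactly when $\det M\neq 0$.

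\emph{Converse.} Start from a non-degenerate real conic $Q$. Its dual conic $Q^*$, the set of its tangent lines, is a smooth conic in the dual plane. For ellipses, hyperbolas and parabolas $Q^*$ has real points, so it admits a rational quadratic parametrization $t\mapsto [u(t):v(t):w(t)]$, obtained by stereographic projection from one point of $Q^*$. This produces a family $F_t$ whose envelope is $Q$. The condition $(u,v)\neq(0,0)$ excludes the line at infinity. That line is tangent to $Q$ only for the parabola, and there it corresponds to the point at infinity of the parametrization, so for real $t$ the condition can be ensured by choosing the projection point suitably.

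\emph{Main obstacle.} I expect the hard part to be non-degeneracy: showing that $\det M\neq 0$ corresponds precisely to a non-degenerate conic, and sorting out the real type of the conic (ellipse, hyperbola or parabola). To do this I would compute explicit normal forms, for example $t^2x$-type families for the parabola $y=x^2$ and tangent-line families of the circle and of the hyperbola $xy=1$. Any non-degenerate real conic is an affine image of one of these, and affine maps act linearly on $(u,v,w)$ with $t$ reparametrized projectively.
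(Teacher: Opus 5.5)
Your proposal is correct. For the envelope formula it does what the paper does: substitute $t=-B/(2A)$ under $A\neq 0$, then count degrees. Your route to the converse is genuinely different.

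The paper stays in coordinates. It rotates away the $xy$-term, completes squares, and in each of the three cases writes down explicit affine $A,B,C$ whose discriminant reproduces the given equation. For the ellipse these are $A=\tfrac12-X$ and $C=\tfrac12+X$, with $B$ a multiple of the shifted $y$; for the parabola, $A=1$. You argue projectively instead: the family is a conic in the dual plane, the envelope is its dual, and the converse comes from a real rational parametrization of $Q^*$.

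Your version proves more than the paper's, in three ways:
\begin{itemize}
\item Since $(A,B,C)$ is obtained from $(x,y,1)$ by the coefficient matrix $M$, the conic $B^2-4AC=0$ is non-degenerate exactly when $\det M\neq 0$. The paper's degree count does not exclude degenerate families, such as pencils of lines through a point.
\item Because $b^2-4ac$ has signature $(2,1)$, its pullback by an invertible real $M$ automatically has real points. This already settles what you call the main obstacle.
\item Your $[t:s]$ remark correctly accounts for points with $A=B=0$, where the discriminant vanishes but no real $t$ exists. For the paper's ellipse family, the point $(p,0)$ is of this kind.
\end{itemize}
What the paper's route buys is the explicit normal forms, which feed directly into the computation of $K$ in the main theorem.

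One small imprecision: in the parabola case, what must be arranged is that the line at infinity sits at $t=\infty$. That is a M\"obius change of the parameter on the pencil, which preserves degree at most $2$ after clearing denominators. It is not really a choice of projection point.
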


\begin{proof}
The envelope is given by the set
$$\{(x, y)\,\vert\, \text{there exists} \;\, t \;  \text{with} \;  F(t, x, y)=\frac{\partial F}{\partial t}(t, x, y)=0\}.$$

Solving $\frac{\partial F}{\partial t}=0$, we have $t=-\frac{B}{2A}$ (assuming $A\neq 0$).
Substituting this into $F=0$, we get $B^2-4AC=0.$
Since $A, B, C\in \R[x, y]$ have degree at most 1, $B^2-4AC=0$ is a quadratic equation.

Conversely, for a general quadratic equation $ax^2+bxy+cy^2+dx+ey+f=0$, if $b\neq 0$, by rotating about the origin, we may suppose that
\begin{equation}
\Bar{a}x^2+\Bar{b}y^2+\Bar{c}x+\Bar{d}y+\Bar{e}=0.
\label{eq:canonical_quad}
\end{equation}

Let $\Delta_0=\Bar{a}\Bar{b}$ and $\Delta=\Bar{a}{\Bar{d}} ^2+\Bar{b}\Bar{c}^2-4\Bar{a}\Bar{b}\Bar{e}.$

If $\Delta_0\neq 0$, by completing the square in \eqref{eq:canonical_quad}, we have
\begin{equation*}
\Bar{a}\left(x+\frac{\Bar{c}}{2\Bar{a}}\right)^2+\Bar{b}\left(y+\frac{\Bar{d}}{2\Bar{b}}\right)^2-\frac{\Delta}{4\Delta_0}=0.
\end{equation*}

Moreover, if $\Delta \neq 0$, we can rewrite this as:
\begin{equation}
\frac{\left(y+\frac{\bar{d}}{2\bar{b}}\right)^2}{\frac{\Delta}{4\bar{b}\Delta_0}}
- 4 \left( \frac{1}{4} - \frac{\left(x+\frac{\bar{c}}{2\bar{a}}\right)^2}{\frac{\Delta}{\bar{a}\Delta_0}} \right) = 0.
\label{eq:ellipse_form}
\end{equation}

In the case where $\Delta_0=0$, if specifically $\bar{a}=0$ and $\bar{b}\neq 0$, equation \eqref{eq:canonical_quad} becomes
\begin{equation}
\left(y+\frac{\bar{d}}{2\bar{b}}\right)^2 +
\left(\frac{\bar{c}}{\bar{b}} x + \frac{4\bar{b}\bar{e}-\bar{d}^2}{4\bar{b}^2}\right) = 0.
\label{eq:parabola_form}
\end{equation}
Similarly, if $\bar{a}\neq 0$ and $\bar{b}=0$, interchanging $x$ and $y$ yields an analogous equation.

For real non-degenerate quadratics, setting $A, B$, and $C$ as follows allows the equation to be expressed in the form $B^2 - 4AC = 0$.

\bigskip
\noindent
\textbf{Case 1: Ellipse ($\Delta_0 > 0$ and $\bar{a}\Delta > 0$)}

Equation \eqref{eq:canonical_quad} represents an ellipse. Based on \eqref{eq:ellipse_form}, we set
\begin{equation}
A = \frac{1}{2} - \frac{x+\frac{\bar{c}}{2\bar{a}}}{\sqrt{\frac{\Delta}{\bar{a}\Delta_0}}}, \quad
B = \frac{y+\frac{\bar{d}}{2\bar{b}}}{\sqrt{\frac{\Delta}{4\bar{b}\Delta_0}}}, \quad
C = \frac{1}{2} + \frac{x+\frac{\bar{c}}{2\bar{a}}}{\sqrt{\frac{\Delta}{\bar{a}\Delta_0}}}.
\label{eq:ABC_ellipse}
\end{equation}

\bigskip
\noindent
\textbf{Case 2: Hyperbola ($\Delta_0 < 0$ and $\Delta \neq 0$)}

If $\bar{a}\Delta < 0$, based on \eqref{eq:ellipse_form}, we set
\begin{equation}
A = \frac{x+\frac{\bar{c}}{2\bar{a}}}{\sqrt{\frac{\Delta}{\bar{a}\Delta_0}}} - \frac{1}{2}, \quad
B = \frac{y+\frac{\bar{d}}{2\bar{b}}}{\sqrt{\frac{-\Delta}{4\bar{b}\Delta_0}}}, \quad
C = \frac{x+\frac{\bar{c}}{2\bar{a}}}{\sqrt{\frac{\Delta}{\bar{a}\Delta_0}}} + \frac{1}{2}.
\label{eq:ABC_hyperbola1}
\end{equation}
If $\bar{a}\Delta > 0$, interchanging $x$ and $y$, we set
\begin{equation*}
A = \frac{y+\frac{\bar{d}}{2\bar{b}}}{\sqrt{\frac{\Delta}{\bar{b}\Delta_0}}} - \frac{1}{2}, \quad
B = \frac{x+\frac{\bar{c}}{2\bar{a}}}{\sqrt{\frac{-\Delta}{4\bar{a}\Delta_0}}}, \quad
C = \frac{y+\frac{\bar{d}}{2\bar{b}}}{\sqrt{\frac{\Delta}{\bar{b}\Delta_0}}} + \frac{1}{2}.
\end{equation*}

\bigskip
\noindent
\textbf{Case 3: Parabola ($\Delta_0 = 0$ and $\Delta \neq 0$)}

If $\bar{a} = 0$, then $\bar{b} \neq 0$ and $\bar{c} \neq 0$. From \eqref{eq:parabola_form}, we set
\begin{equation}
A = 1, \quad B = y + \frac{\bar{d}}{2\bar{b}}, \quad
C = -\frac{1}{4} \left( \frac{\bar{c}}{\bar{b}} x + \frac{4\bar{b}\bar{e}-\bar{d}^2}{4\bar{b}^2} \right).
\label{eq:ABC_parabola1}
\end{equation}
If $\bar{b} = 0$, then $\bar{a} \neq 0$ and $\bar{d} \neq 0$. Interchanging $x$ and $y$, we set
\begin{equation*}
A = 1, \quad B = x + \frac{\bar{c}}{2\bar{a}}, \quad
C = -\frac{1}{4} \left( \frac{\bar{d}}{\bar{a}} y + \frac{4\bar{a}\bar{e}-\bar{c}^2}{4\bar{a}^2} \right).
\end{equation*}
\end{proof}

\section{Main Theorem}
Although the definitions above were stated on $\R^2$ for simplicity, they apply equally to any open subset of $\R^2$.
Let $Q$ be a non-degenerate real quadratic curve, and let $\Omega_Q$ be the open set of points from which two distinct real tangent lines to $Q$ can be drawn. On $\Omega_Q$, these tangent lines define two transverse foliations, hence a Lagrangian 2-web.

\begin{theorem}
Let $Q$ be a non-degenerate real quadratic curve. Then the Lagrangian 2-web defined on $\Omega_Q$ by the tangent lines to $Q$ is not a Samuelson web.
\end{theorem}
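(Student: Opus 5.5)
The plan is to avoid the general formula of Theorem 2.3 altogether and work with criterion (2) of Proposition 2.1, using the tangency parameters themselves as the level functions. Two preliminary observations make this possible.

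First, the Samuelson condition is invariant under affine maps of the plane. Such a map sends tangent lines of $Q$ to tangent lines of the image conic, and it multiplies all areas by the same constant, so every ratio $A/B$ and $C/D$ is unchanged. By Proposition 3.1 and the usual affine classification, it therefore suffices to treat three normal forms: the circle $x^2+y^2=1$, the hyperbola $xy=1$, and the parabola $y=x^2$.

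Second, near any point of $\Omega_Q$ the two tangents through that point can be ordered consistently. Their tangency parameters $t_1,t_2$ are then smooth first integrals of the two foliations. Any other choice of level functions differs from $(t_1,t_2)$ by a recalibration, and recalibrations preserve the factorized form $a(u)b(v)$ in Proposition 2.1(2). So it is enough to show that, in the coordinates $(t_1,t_2)$, the quantity $\partial^2 \log |J_{\phi^{-1}}|/\partial t_1\partial t_2$ is nonzero somewhere. By Remark 2.2 this contradicts (2), and hence (1).

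\textbf{Parabola.} The tangents at $t_1$ and $t_2$ meet at $x=(t_1+t_2)/2$, $y=t_1t_2$. The Jacobian of $(t_1,t_2)\mapsto(x,y)$ is
$$J=\tfrac12 t_1-\tfrac12 t_2=\tfrac12(t_1-t_2),$$
and
$$\partial_{t_1}\partial_{t_2}\log|t_1-t_2|=(t_1-t_2)^{-2}\neq 0.$$

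\textbf{Hyperbola.} The tangent at $(t,1/t)$ is $x/t+ty=2$. Two such tangents meet at $x=2t_1t_2/s$, $y=2/s$, where $s=t_1+t_2$. A short computation gives
$$J=\frac{4(t_1-t_2)}{(t_1+t_2)^3}.$$
The mixed second derivative of $\log|J|$ is then
$$(t_1-t_2)^{-2}+3(t_1+t_2)^{-2}>0.$$

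\textbf{Circle.} Parametrize by angles $\theta_1,\theta_2$ and set $m=(\theta_1+\theta_2)/2$, $d=(\theta_1-\theta_2)/2$. The tangents meet at $x=\cos m/\cos d$, $y=\sin m/\cos d$. Differentiating,
$$\frac{\partial(x,y)}{\partial(m,d)}=-\frac{\sin d}{\cos^3 d}.$$
Since $(\theta_1,\theta_2)\mapsto(m,d)$ has constant Jacobian, $\log|J|=\log|\sin d|-3\log|\cos d|+\text{const}$. As $\partial_{\theta_1}\partial_{\theta_2}=\tfrac14(\partial_m^2-\partial_d^2)$, we get
$$\partial_{\theta_1}\partial_{\theta_2}\log|J|=\tfrac14\left(\frac{1}{\sin^2 d}+\frac{3}{\cos^2 d}\right)\neq 0.$$
Here $d\notin\pi\mathbb Z$ because the two tangents are distinct, and $\cos d\neq0$ on $\Omega_Q$.

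The main obstacle is not computational. It is making sure the reduction is legitimate. One must check that affine maps preserve the Samuelson property; this is clear from the area-ratio definition. One must also check that using the tangency parameters as $(u,v)$ is allowed: they are smooth local first integrals whose level sets are exactly the leaves, and the criterion in Proposition 2.1 is independent of recalibration. Once these points are settled, the Jacobian computations in the three cases are routine. If one insists on the formula of Theorem 2.3 instead, one would feed the slopes $a,b$ into the expression $K$ of Remark 2.4 and evaluate it at a single point. That route is far more laborious, and I would use it only as a cross-check.
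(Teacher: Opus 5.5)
Your reductions are sound: affine maps scale all areas by one constant and carry tangent webs to tangent webs, and recalibration preserves the factorized form. Your parabola and hyperbola computations also check out.

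The circle case, however, has a sign error. Set $f(d)=\log|\sin d|-3\log|\cos d|$. Then $f''(d)=-\csc^2 d+3\sec^2 d$. Since $\log|J|$ is independent of $m$, $\partial_{\theta_1}\partial_{\theta_2}\log|J|=-\tfrac14 f''(d)$, so
\[
\partial_{\theta_1}\partial_{\theta_2}\log|J|=\frac14\Bigl(\frac{1}{\sin^2 d}-\frac{3}{\cos^2 d}\Bigr).
\]
This vanishes exactly when $\tan^2 d=1/3$, i.e.\ on the circle $x^2+y^2=\sec^2 d=4/3$, which lies in $\Omega_Q$.

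Two independent checks confirm this. First, with $t_i=\tan(\theta_i/2)$ the tangents through $(x,y)$ are the roots of $(x+1)t^2-2yt+(1-x)=0$. This gives $J=2(t_1-t_2)/(1+t_1t_2)^3$ and mixed derivative $(t_1-t_2)^{-2}-3(1+t_1t_2)^{-2}$, with the same zero set. Second, the paper's ellipse formula $K=4(3q^2x^2+3p^2y^2-4p^2q^2)/(p^2-x^2)^3$ vanishes on $x^2+y^2=4/3$ when $p=q=1$.

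So your displayed ``$\neq 0$'' is false. The ellipse genuinely differs from the other two conics, where the obstruction never vanishes on $\Omega_Q$. The proof is nonetheless rescued by your own remark that nonvanishing somewhere suffices. The corrected expression tends to $+\infty$ as $d\to 0$, so it is not identically zero, which is exactly what the paper asserts for the ellipse. Replace ``$\neq 0$'' by ``not identically zero''.

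With that fix, your argument is a genuinely different and more elementary proof than the paper's.
\begin{itemize}
\item The paper normalizes only by rotations and translations and keeps the parameters $p,q$. It writes the slopes $a,b$ with square roots, substitutes them into the machine-derived formula of Theorem 2.3 and Remark 2.4, and evaluates $K$ with Mathematica.
\item You use affine invariance of the area ratios to reach three parameter-free normal forms. You then take the tangency parameters as first integrals, so $\phi^{-1}$ is explicit. In every case $J_{\phi^{-1}}=c\,(t_1-t_2)/\ell^3$, with $\ell=1$, $t_1+t_2$, $1+t_1t_2$ for the parabola, the hyperbola, and the circle (the last in the parameter $\tan(\theta/2)$).
\end{itemize}
The d'Alembert test then becomes a two-line hand computation using only Proposition 2.1 and Remark 2.2. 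This avoids Theorem 2.3 entirely, which matters given the paper's own remarks about errors in earlier versions of that formula.

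The paper's route gives in return $K$ directly as a function of $(x,y)$ for all $p,q$, so the zero locus in the plane is immediately visible. In your coordinates you must translate $d$ back to $x^2+y^2$ to see it. As the sign slip shows, that is exactly where a hand computation should be cross-checked.
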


\begin{proof}
First, we note that any non-degenerate real quadratic curve can be transformed into one of the canonical forms by a composition of translations and rotations. Since translations and rotations preserve the standard symplectic form $\omega=dx\wedge dy$, and since the definition of a Samuelson web is invariant under symplectic equivalence, it suffices to prove the statement for the canonical forms.

By Remark 2.2, it suffices to show that for each conic, the d'Alembert condition is not satisfied; that is, the quantity $K$ defined in Remark 2.4 does not vanish. 

\bigskip
\noindent
$\bullet$ {\bf Ellipse}

Consider an ellipse with the canonical form $\dfrac{x^2}{p^2} + \dfrac{y^2}{q^2} = 1$.
By \eqref{eq:ABC_ellipse}, the family of its tangent lines is given by
\begin{equation*}
F(t, x, y) = \left(\frac{1}{2} - \frac{x}{2p}\right)t^2 + \frac{y}{q}t + \frac{1}{2} + \frac{x}{2p} = 0.
\end{equation*}
Solving for $t$, we get
\begin{equation*}
t = \frac{-\frac{y}{q} \pm \sqrt{\frac{x^2}{p^2} + \frac{y^2}{q^2} - 1}}{1 - \frac{x}{p}}.
\end{equation*}
These solutions yield the level functions $u(x, y)$ and $v(x, y)$, whose level curves coincide with the lines $F_t(x,y)=0$.
The slope $a(x, y) = -u_x/u_y$ is given by
\begin{equation*}
a(x, y) = \frac{-xy + \sqrt{q^2x^2 + p^2y^2 - p^2q^2}}{p^2 - x^2}.
\end{equation*}
Similarly,
\begin{equation*}
b(x, y) = \frac{-xy - \sqrt{q^2x^2 + p^2y^2 - p^2q^2}}{p^2 - x^2}.
\end{equation*}
A direct computation, assisted by Mathematica, yields
\begin{equation*}
K = \frac{4(3q^2x^2 + 3p^2y^2 - 4p^2q^2)}{(p^2 - x^2)^3}.
\end{equation*}
Since $K$ is not identically zero, the condition is not satisfied.

\bigskip
\noindent
$\bullet$ {\bf Hyperbola}

Consider a hyperbola with the canonical form $\dfrac{x^2}{p^2} - \dfrac{y^2}{q^2} = 1$.
By \eqref{eq:ABC_hyperbola1}, we have
\begin{equation*}
F(t, x, y) = \left(\frac{x}{2p} - \frac{1}{2}\right)t^2 + \frac{y}{q}t + \frac{x}{2p} + \frac{1}{2} = 0.
\end{equation*}
Solving for $t$, we get
\begin{equation*}
t = \frac{-\frac{y}{q} \pm \sqrt{1 - \left(\frac{x^2}{p^2} - \frac{y^2}{q^2}\right)}}{-1 + \frac{x}{p}}.
\end{equation*}
The slopes $a(x, y)$ and $b(x, y)$ are given by
\begin{equation*}
a(x, y), b(x, y) = \frac{-xy \pm \sqrt{p^2q^2 - (q^2x^2 - p^2y^2)}}{p^2 - x^2}.
\end{equation*}
A direct computation, assisted by Mathematica, yields
\begin{equation*}
K = \frac{4(4p^2q^2 - (3q^2x^2 - 3p^2y^2))}{(p^2 - x^2)^3} \neq 0.
\end{equation*}

\bigskip
\noindent
$\bullet$ {\bf Parabola}

Consider a parabola with the canonical form $y^2 = 4px$.
By \eqref{eq:ABC_parabola1}, we have
\begin{equation*}
F(t, x, y) = t^2 + yt + px = 0.
\end{equation*}
Solving for $t$, we get
\begin{equation*}
t = \frac{-y \pm \sqrt{y^2 - 4px}}{2}.
\end{equation*}
The slopes $a(x, y)$ and $b(x, y)$ are given by
\begin{equation*}
a(x, y), b(x, y) = \frac{y \pm \sqrt{y^2 - 4px}}{2x}.
\end{equation*}
A direct computation, assisted by Mathematica, yields
\begin{equation*}
K = \frac{-p}{x^3} \neq 0 \quad (\text{since } p \neq 0).
\end{equation*}
This completes the proof.
\end{proof}

\end{document}